\newtheorem{theorem}{Theorem}[section]
\newtheorem{corollary}[theorem]{Corollary}
\begin{document}
\title{Nullspace Property for Optimality of Minimum Frame Angle Under Invertible Linear Operators}
\author{Pradip Sasmal, Prasad Theeda, Phanindra Jampana and Challa S Sastry
\thanks{Dept. of ECE, Indian Institute of Science, Bangalore, India. email:\{pradipsasmal\}@iisc.ac.in.}
\thanks{Vellore Institute of Technology (VIT) University, Vellore, Tamilnadu, India. email:\{prasad.theeda\}@vit.ac.in.}

\thanks{Indian Institute of Technology, Hyderabad, Telangana. 502285, India. Email:\{pjampana, csastry\}@iith.ac.in}

\thanks{This work has been submitted to the IEEE for possible publication. Copyright may be transferred without notice, after which this version may no longer be accessible.}
}

\markboth{Journal of \LaTeX\ Class Files, Vol. xx, No. xx, 2021}
{Shell \MakeLowercase{\textit{et al.}}: Bare Demo of IEEEtran.cls for IEEE Journals}
\maketitle

\begin{abstract}
Frames with a large minimum angle between any two distinct frame vectors are desirable in many present day applications. For a unit norm frame, the absolute value of the cosine of the minimum frame angle is also known as coherence. Two frames are equivalent if one can be obtained from the other via left action of  an invertible linear operator. Frame angles can change under the action of a linear operator. Most of the existing works solve different optimization problems to find an optimal linear operator that maximizes the minimal frame angle (in other words, minimizes the coherence). In the present work, nevertheless, we consider the question: Is it always possible to find an equivalent frame with smaller coherence for a given frame?. In this paper, we derive properties of the initial unit 
norm frame that can ensure an equivalent frame with strictly larger minimal frame angle compared to the initial one. It turns out that the nullspace property of a certain matrix obtained from the initial frame can guarantee such an equivalent frame. We also present the numerical results that support our theoretical claims. 
\end{abstract}

\begin{IEEEkeywords}
  Minimum frame angle, Coherence, Preconditioning, Compressed Sensing, Convex Optimization, Semidefinite Programming. 
\end{IEEEkeywords}

\IEEEpeerreviewmaketitle

\section{Introduction}
Frame theory has
applications in 
fields such as signal processing, sparse representation theory and operator theory.  The coherence of a frame is defined as the largest absolute normalized inner
product between two distinct frame vectors. A finite frame can be represented as a matrix of full row rank.
For a fixed number of elements, frames with the smallest coherence
are
called Grassmannian frames \cite{stro_2003}. Grassmanian frames attaining the Welch bound  are known as equiangular tight frames (ETFs) \cite{mixon_2012}. 
Incoherent frames play a significant role due to their ability in providing sparse representations. 

\par The field of sparse representation theory, popularly known as compressed sensing (CS) \cite{elad_2007, Kashin_2007}, recovers a sparse signal from a few of its 
linear measurements. Performance of several sparse recovery algorithms such as basis pursuit (BP) and orthogonal matching pursuit (OMP) depends 
on the coherence of the underlying frame.
Frames that satisfy the restricted isometry property (RIP)
\cite{Elad_2006}\cite{UoB} are known to allow for exact recovery of sparse
signals from a few of their linear measurements. However, in general, it
is computationally hard to verify the RIP of a
given frame. In contrast, the coherence of a frame, being easily computable, asserts RIP up to certain order \cite{Elad_2010}.

 Two linear systems of equations provided by two frames are equivalent if the underlying frames are equivalent. However, the sparse recovery properties of equivalent frames can be significantly different. Consequently, the performances of sparse recovery algorithms can be different. In \cite{zhang_2013}, the authors presented that the RIP of an equivalent frame can be bad compared to the initial frame, where as, in \cite{pra_2015}, the authors derived that the RIP constant of an equivalent frame can be improved. On the other hand, the coherence of two equivalent frames can also be different.

Several methods exist in the literature \cite{elad_2007}\cite{duarte_2009}\cite{li_2013}\cite{alemazkoor_2018}
for finding an equivalent frame with optimal coherence.
Although these  methods work well in practice, they do not possess theoretical guarantees for reduction in coherence.
In this work, however, we consider the question: `` For a given frame, is it possible to find an equivalent frame with smaller coherence?."  
The main objective of the present work is to derive sufficient conditions on a frame that can ensure an equivalent frame possessing smaller coherence. We show that the existence of such an equivalent  frame
can be ascertained by checking for in-feasibility of a linear system of equations. The null space property of a certain matrix obtained from the initial frame ensures the existence of an equivalent frame with a strictly smaller coherence. 
The main contributions of the paper are summarized below:
\begin{itemize}
\item We derive the sufficient conditions that can guarantee existence of an equivalent frame having smaller coherence compared to the initial frame.
\item We present numerical results that validate our theoretical analysis.
\end{itemize}

\section{Basics of Frame theory}
\subsection{Frame Theory}
A family of vectors $\{\phi_{i}\}^{M}_{i=1}$ in $\mathbb{R}^{m}$ is called a frame \cite{chris_2003} for $\mathbb{R}^{m}$, if there exist constants $0 < A \leq B < \infty$ such that
\begin{align*}
A\left\|z\right\|^{2} \leq \sum^{M}_{i=1}\left|\left\langle z, \phi_{i}\right\rangle\right|^{2} \leq B\left\|z\right\|^{2}, \forall z \in \mathbb{R}^{m},   
\end{align*}
\noindent where $A$ and $B$ are called the lower and upper frame bounds respectively. 
If $A=B$, then $\{ \phi_{i} \}^{M}_{i=1}$ is an $A-$tight frame. 
If there exists a constant $d$ such that $|\left\langle \phi_{i} , \phi_{j} \right\rangle| =d,$ for $1\leq i< j\leq M,$ then $\{ \phi_{i} \}^{M}_{i=1}$ is an
equiangular frame. 
If there exits a constant $c$ such that $\left\|\phi_{i}\right\|_{2} = c$
  for all $i= 1,2,\ldots, n,$ then $\{ \phi_{i} \}^{M}_{i=1}$ is an equal norm frame. If $c=1,$ then it is called a unit norm frame.
If a frame is both unit norm and tight, it is called a unit norm tight
 frame (UNTF). 
If a frame is both UNTF and equiangular, it is called an equiangular
tight frame (ETF).
The coherence of a frame $\Phi$ is given by $$\mu(\Phi)= \max_{1\leq\; i,j \leq\; M,\; i\neq j} \frac{|\; \phi_i ^T\phi_j|}{\Vert \phi_i\Vert_{2} \Vert \phi_j \Vert_2}.$$  
Coherence based techniques are used in establishing the guaranteed recovery of sparse signals via Orthogonal Matching Pursuit (OMP) or Basis Pursuit (BP), as summarized by the following result \cite{troop_2010}. 
  \noindent
\begin{theorem}
  An arbitrary $k-$sparse signal $x$ can be uniquely recovered
  using OMP and BP,
  provided 
\begin{equation} \label{eq:OMP_bound}
k < \frac{1}{2}\biggl(1+\frac{1}{\mu(\Phi)}\biggl).
\end{equation}
\end{theorem}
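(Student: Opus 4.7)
My plan is to unify the two recovery claims by first passing through Tropp's Exact Recovery Condition (ERC), and then verifying the ERC from the coherence hypothesis. Let $S = \mathrm{supp}(x)$ with $|S| = k$, write $\Phi_S$ for the submatrix of $\Phi$ with columns indexed by $S$, and (without loss of generality, using the normalization built into $\mu(\Phi)$) assume the columns of $\Phi$ are unit norm. The ERC states that if
\[
\max_{j \notin S} \left\| \Phi_S^\dagger \, \phi_j \right\|_1 \;<\; 1,
\]
then (i) OMP initialized at residual $y$ selects only atoms from $S$ in its first $k$ iterations and hence recovers $x$ exactly, and (ii) $x$ is the unique optimizer of $P_1(\Phi,y)$. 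So the task reduces to deducing the ERC from $k < \tfrac{1}{2}(1 + 1/\mu(\Phi))$.

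To verify the ERC, I would analyze the $k \times k$ Gram matrix $\Phi_S^T \Phi_S$ via Gershgorin. Its diagonal entries equal $1$ and its off-diagonals are bounded in absolute value by $\mu := \mu(\Phi)$, so each Gershgorin disc is centered at $1$ with radius at most $(k-1)\mu$. The assumption $k < \tfrac{1}{2}(1 + 1/\mu)$ gives $(k-1)\mu < 1$, which already implies $\Phi_S^T \Phi_S$ is invertible and yields the operator-norm bound $\left\|(\Phi_S^T \Phi_S)^{-1}\right\|_{\infty \to \infty} \le \tfrac{1}{1 - (k-1)\mu}$. For any $j \notin S$, the vector $\Phi_S^T \phi_j \in \mathbb{R}^k$ has entries bounded by $\mu$, so $\|\Phi_S^T \phi_j\|_1 \le k\mu$. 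Combining,
\[
\left\| \Phi_S^\dagger \phi_j \right\|_1 \;=\; \left\| (\Phi_S^T \Phi_S)^{-1} \Phi_S^T \phi_j \right\|_1 \;\le\; \frac{k\mu}{1 - (k-1)\mu},
\]
and the right-hand side is strictly less than $1$ exactly when $(2k-1)\mu < 1$, i.e. precisely the hypothesis of the theorem. This establishes the ERC.

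It remains to justify that the ERC implies both conclusions. For OMP, I would argue by induction on the iteration index: the residual at step $t$ lies in $\mathrm{range}(\Phi_S)$, so I can compare $\max_{i \in S} |\phi_i^T r^{(t)}|$ with $\max_{j \notin S} |\phi_j^T r^{(t)}|$ and use the ERC to show the latter is strictly smaller, forcing the greedy selection to stay inside $S$; after $k$ steps the support is fully identified and the least-squares step returns $x$. For the $P_1$ conclusion, I would produce a dual certificate: set $u = \Phi_S (\Phi_S^T \Phi_S)^{-1} \mathrm{sign}(x_S)$, so that $(\Phi^T u)_i = \mathrm{sign}(x_i)$ for $i \in S$, and observe $|(\Phi^T u)_j| = |\phi_j^T u| \le \| \Phi_S^\dagger \phi_j \|_1 < 1$ for $j \notin S$ by the ERC; standard duality then gives uniqueness of $x$ as the $\ell_1$ minimizer.

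The main obstacle I anticipate is the OMP induction, since one must carefully track that the residual after an in-support selection remains in $\mathrm{range}(\Phi_S)$ and that the comparison of inner products survives the orthogonalization step; everything else is Gershgorin bookkeeping and a routine dual-certificate computation.
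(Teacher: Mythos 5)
Your proposal is correct, but note that the paper does not prove this statement at all: it is quoted as a known result and attributed to Tropp and Wright, so there is no internal proof to compare against. Your argument --- reduce both claims to the Exact Recovery Condition $\max_{j\notin S}\|\Phi_S^\dagger\phi_j\|_1<1$, verify it via the bounds $\|(\Phi_S^T\Phi_S)^{-1}\|\le \frac{1}{1-(k-1)\mu}$ and $\|\Phi_S^T\phi_j\|_1\le k\mu$, and then invoke the greedy selection ratio for OMP and the dual certificate $u=\Phi_S(\Phi_S^T\Phi_S)^{-1}\mathrm{sign}(x_S)$ for $P_1$ --- is precisely the canonical proof from the cited literature (Tropp's ERC paper and Donoho--Elad), and the algebra $\frac{k\mu}{1-(k-1)\mu}<1\iff(2k-1)\mu<1$ matches the stated bound; the only point worth making explicit is that your $\infty\to\infty$ operator-norm bound transfers to the $\ell_1\to\ell_1$ norm you actually use because $(\Phi_S^T\Phi_S)^{-1}$ is symmetric.
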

\noindent If $G$ is a nonsingular matrix, then the system $Gy = G\Phi x$ is equivalent to $y=\Phi x$. The bound in (\ref{eq:OMP_bound}) then suggests that if $\mu(G\Phi) < \mu(\Phi)$ both BP and OMP have better performance guarantees when applied on the equivalent system $Gy = G\Phi x$.

\section{Main results}
\label{main body}
In this section, we present the properties of an initial frame that can ensure strict fall in coherence under the left multiplication of an invertible linear operator. Our main results concerning the sufficient conditions on the initial frames can be given by the following theorem.
\begin{theorem}
\label{Main Theorem}
For a given unit norm frame $\Phi_{m\times M}$ for $\mathbb{R}^{m}$ with coherence $\mu(\Phi)$, let $\phi_i$ denote the $i^{th}$ column of $\Phi$ and suppose
$$D^{+}_{I_{m}} :=\{(i,j): \phi^{T}_{i}\phi_{j} =\mu(\Phi)\}$$ and, 
$$D^{-}_{I_{m}} :=\{(i,j): \phi^{T}_{i}\phi_{j}=-\mu(\Phi)\}.$$ 
Consider the matrix 
$\Psi_{m^{2}\times (M+|D^{+}_{I_{m}}|+|D^{-}_{I_{m}}|)}=$
$$\bigg[
  \bigg(\text{vec}(\phi_{i}\phi^{T}_{i})\bigg)^{M}_{i=1}  \bigg(\text{vec}(\phi'_{ij})\bigg)_{(i,j)\in D^{+}_{I_{m}}}  -\bigg(\text{vec}(\phi'_{ij})\bigg)_{(i,j)\in D^{-}_{I_{m}}}\bigg],$$ 
  where
 $\phi'_{ij} :=\frac{\phi_{i}\phi_{j}^{T} + \phi_{j}\phi_{i}^{T}}{2}$, and the $\text{`vec'}$ operation on a matrix of size $m\times M$ produces a vector of length $mM$ by stacking the columns one below the other vertically.  
If there does not exist a vector $r\in \mathbb{R}^{M+|D^{+}_{I_{m}}|+|D^{-}_{I_{m}}|}$ in the nullspace of $\Psi$ satisfying 
$$\sum^{M}_{k=1}r_{k}=-\mu_{\Phi}$$ and
$$\sum^{M+|D^{+}_{I_{m}}|+|D^{-}_{I_{m}}|}_{k=M+1}r_{k}=1,$$ then there exits an invertible operator $G$ such that 
$\mu(G\Phi) <\mu(\Phi).$ 
\end{theorem}
\begin{proof}
 Let $S$ be the set of invertible operators $G$ such that $G\Phi$ is a unit norm frame for $\mathbb{R}^{m},$ that is,
$$S=\{G\in \mathbb{R}^{m\times m}: |G|\neq 0, \,  \|G\Phi_{i}\|_{2}=1,\, \forall\, i=1,2,\dots, M\},$$ where $|G|$ denotes the determinant of $G$.
It can be noted that $S \neq \emptyset$ as $I_{m\times m}\in S,$ where $I_{m\times m}$ is the identity matrix. Therefore, in order to show that there exists an invertible operator $G\in S$ such that $\mu_{G\Phi}< \mu_{\Phi},$ it is enough to show that $I_{m\times m}$ is not an optimal solution for the following optimization problem
\begin{equation*}
\begin{aligned}
C_{0}: \; &\underset{G\in S}{\text{arg min}}
&&   \max_{i\neq j}|\langle G\phi_{i},G\phi_{j} \rangle|.\\
\end{aligned}
\end{equation*}
An equivalent formulation of $C_{0}$ is 
\begin{equation*}
\begin{aligned}
&\underset{X}{\text{arg min}}
& &  \max_{i\neq j}|\phi^{T}_{i} X \phi_{j}|  \\
& \text{subject to}
& &  \phi_{i}^{T} X \phi_{i}= 1, 
\quad \forall i=1, \dots , M.\\
&&& X \succ 0,
\end{aligned}
\end{equation*}
where $X= G^{T}G$ and $X\succ 0$ denotes that $X$ is positive definite. The advantage of the equivalent formulation of $C_{0}$ is that the constraints are linear in $X$ and the objective function is convex in $X.$ 
Since the constraint set $$S_{0} = \{X\succ 0 : \phi_{i}^{T} X \phi_{i}= 1,
\quad \forall i=1, \dots , M \}$$ is convex but not closed, we consider
$$S_{1} = \{X\succeq 0 :  \phi_{i}^{T} X \phi_{i}= 1,
\quad \forall i=1, \dots , M \},$$ where $X\succeq 0$ implies that $X$ is positive semi-definite with the  corresponding convex optimization problem,
\begin{equation*}
\begin{aligned}
C'_{0}: \; &\underset{X}{\text{arg min}}
& &  \max_{i\neq j}|\phi^{T}_{i} X \phi_{j}|  \\
& \text{subject to}
& &  X \in S_{1},
\end{aligned}
\end{equation*}

Adding slack and surplus variables $p_{ij}\geq 0$ and $q_{ij}\geq 0$ respectively, one may obtain an equivalent formulation of $C'_{0}$:
\begin{equation*}
  C_1: 
\begin{aligned}
& \underset{X,q,p_{ij},q_{ij}}{\text{max}}
& & (-q) \\
& \text{subject to}
& & \phi_{i}^{T} X \phi_{i}=1,
\forall i=1,\dots,M ; \\
&&& \phi^{T}_{i}X\phi_{j} + p_{ij} - q =0, \forall 1 \leq i < j \leq M;\\
&&& -\phi^{T}_{i}X\phi_{j} + q_{ij} - q =0, \forall 1 \leq i < j \leq M;\\
&&& X \succeq 0 \\
&&& q \geq 0 \\
&&& p_{ij} \geq 0, \; \forall 1 \leq i < j \leq M;\\
&&& q_{ij} \geq 0, \; \forall 1 \leq i < j \leq M.
\end{aligned}
\end{equation*}
Using $M' = \frac{M(M-1)}{2},$ let $\mathbf{0}$ denote the zero matrix of size $m\times m$,
$\mathbf{0'}$ a square zero matrix of size $M'\times M'$, $P$ a
diagonal matrix of size $M'\times M'$ consisting of $p_{ij}$ as diagonal
elements, $Q$ a diagonal matrix of size $M'\times M'$
containing $q_{ij}$ as diagonal elements. Finally, let
$\mathbf{1}_{ij}$ be the diagonal matrix of size $M'\times M'$ whose diagonal
entries are indexed by arranging the tuples $(i,j)$ in lexicographic order
for $1 \leq i < j \leq M$ so that it contains $1$
at the $(i,j)-$th diagonal element and zero elsewhere. For simplicity in
notation, we consider
$\phi'_{ij}=\frac{\phi_{i}\phi_{j}^{T} + \phi_{j}\phi_{i}^{T}}{2}$ and
define the following block matrices:

$F_0 = 
\begin{pmatrix}
\mathbf{0} & 0 & 0 & 0 \\
0 & \mathbf{0'} & 0 & 0 \\
0 & 0 & \mathbf{0'} & 0 \\
0 & 0 & 0 & 1 
\end{pmatrix}, \;
F_{ii}=
\begin{pmatrix}
\phi_{i}\phi_{i}^{T} & 0 & 0 & 0\\
0 & \mathbf{0'} & 0 & 0\\
0 & 0 & \mathbf{0'} & 0 \\
0 & 0 & 0 & 0
\end{pmatrix}$,

$F_{ij}=
\begin{pmatrix}
\phi'_{ij} & 0 & 0 & 0 \\
0 & \mathbf{1}_{ij} & 0 & 0 \\
0 & 0 & \mathbf{0'} & 0 \\
0 & 0 & 0 & -1
\end{pmatrix}, \;
Y =
\begin{pmatrix}
X & 0 & 0 & 0 \\
0 & P & 0 & 0 \\
0 & 0 & Q & 0 \\
0 & 0 & 0 & q
\end{pmatrix}$,

$F_{ji}=
\begin{pmatrix}
-\phi'_{ij}& 0 & 0 & 0\\
0 & \mathbf{0'} & 0 & 0\\
0 & 0 & \mathbf{1}_{ij} & 0\\
0 & 0 & 0 & -1
\end{pmatrix}$.

It is easy to check that, for $1\leq i < j \leq M$, $F_{ii}$, $F_{ij},$
$F_{ji}$ and $F_0$ are symmetric. Using these matrices, we reformulate $C_{1}$ in a standard  Semi-definite Programming (SDP) \cite{Boyd_1996} form  as 
\begin{equation*}
 SDP_{\mathbf{C}_{1}}:
\begin{aligned}
& \underset{Y}{\text{max}}
&& -Tr(F_0Y)\\
& \text{subject to}
&& Tr(F_{ii}Y) = 1,
\quad \forall \quad i=1,\dots,M ; \\
&&& Tr(F_{ij}Y) = 0, \quad \forall\quad 1 \leq i < j \leq M;\\
&&& Tr(F_{ji}Y) = 0, \quad \forall \quad 1 \leq i < j \leq M,\\
&&& Y \succeq 0, 
\end{aligned}
\end{equation*}
where $Tr(A)$ represents trace of the matrix $A$ 
and $Y\succeq 0$ implies that $Y$ is positive semi-definite, that is, $\zeta^{T}Y\zeta\geq 0$ for all $\zeta\in \mathbb{R}^{M^{2}-M+m+1}$. 
The dual of $SDP_{\mathbf{C}_{1}}$ is given by
\begin{equation*}
 SDPD_{\mathbf{C}_{1}}:
\begin{aligned}
& \underset{z={\{z_{ij}\}^{M}_{i,j=1}}}{\text{min}}
&& c^{T}z\\
& \text{subject to}
&& F_0 + \sum^{M}_{i=1}z_{ii} F_{ii} +
\sum^{M}_{i,j=1,i<j}z_{ij} F_{ij} + \\
&&& ~~~\sum^{M}_{i,j=1,i<j}z_{ji} F_{ji} \succeq \mathbf{0''},
\end{aligned}
\end{equation*}
where $\mathbf{0''}$ is a square zero matrix of size $M''=M^{2}-M+m+1$ and
$c=\{c_{i}\}^{M^2}_{i=1}\in \mathbb{R}^{M^{2}},$ where $c_{i}$ is 1 for $i=1,2, \dots, M$ and 0 for {\color{black}{$i=M+1, \dots, M^2.$}}

 It is easy to check that, if $X$ is the identity
matrix, $p_{ij}=1 - \phi_{i}^{T}\phi_{j},$ $q_{ij}=1+
\phi_{i}^{T}\phi_{j}$ and $q=1$, then $Y$ becomes a strict feasible
solution of $SDP_{\mathbf{C}_{1}}$. Similarly, one can verify that
with $z_{ii}=1$ and $z_{ij}= z_{ji} =\frac{1}{M^{2}}$, $Z$ becomes a strict feasible
solution of $SDPD_{\mathbf{C}_{1}}$. Since both primal and dual have strict
feasible solutions, by strong duality (Theorem 3.1 in \cite{Boyd_1996}), optimal
values of primal and dual optimization problems coincide with each other. Consequently, the
duality gap is zero for any optimal pair $(Y^{*},Z^{*}),$ where $Y^{*}$
is an optimal solution for $SDP_{\mathbf{C}_{1}}$ and $Z^{*}$ is an optimal
solution for $SDPD_{\mathbf{C}_{1}}.$ The duality gap being zero implies that 
$0=C^{T}_{M''}Z^{*}+ Tr(F_0Y)=\sum^{M}_{i=1}z^{*}_{ii} + q^{*}$ which implies
further that $q^{*}=-\sum^{M}_{i=1}z^{*}_{ii}.$

The standard optimality condition (Equation (33) in \cite{Boyd_1996}) concerning the primal and dual solutions can be 
written as

$$\begin{aligned}
 Tr(F_{ii}Y^{*}) = 1, \quad \forall \quad i=1,\dots,M ; \\
 Tr(F_{ij}Y^{*}) = 0, \quad \forall\quad 1 \leq i < j \leq M;\\
 Tr(F_{ji}Y^{*}) = 0, \quad \forall \quad 1 \leq i < j \leq M,\\
 Y \succeq 0.
\end{aligned}$$
and 
\begin{equation*}
  \begin{split}
    Y^{*}&\left(F_0+\sum^{M}_{i=1}z^{*}_{ii} F_{ii}+\sum^{M}_{i,j=1,i<j}z^{*}_{ij}
      F_{ij}+\sum^{M}_{i,j=1,i<j}z^{*}_{ji}F_{ji}\right) \\
    & \quad =\mathbf{0''},\\
    & (c^{*})^{T}z^{*}=q^{*}.
  \end{split}
\end{equation*}
The above condition results in the following equations: 

\begin{enumerate}[(i)]
\item 
$Tr\bigg(X^{*} (\phi_{i}\phi_{i}^{T})\bigg)=1,  \quad \forall \quad i=1,\dots,M $
\item $Tr(X^{*}\phi'_{ij})+p^{*}_{ij}-q^{*}=0, \quad \forall\quad 1 \leq i < j \leq M$
\item $Tr(-X^{*}\phi'_{ij})+q^{*}_{ij}-q^{*}=0, \quad \forall\quad 1 \leq i < j \leq M$

\item $X^{*}\bigg(\sum^{M}_{i=1}z^{*}_{ii} \phi_{i}\phi^{T}_{i} + \sum^{M}_{i,j=1,i<j}(z^{*}_{ij} - z^{*}_{ji}) \phi'_{ij}\bigg) = 0$
\item $z^{*}_{ij}p^{*}_{ij}=0$
\item $z^{*}_{ji}q^{*}_{ij}=0$
\item $q^{*}\left(1-\sum^{M}_{i,j=1,i<j}(z^{*}_{ij} + z^{*}_{ji})\right)=0$
\item $ - \sum^{M}_{i=1}z^{*}_{ii}=q^{*}$.
\end{enumerate}
If $X^{*}$ is assumed to be positive definite, then the fourth equality above
reduces to \begin{equation}\label{eq:2}
    \sum^{M}_{i=1}z^{*}_{ii} \phi_{i}\phi^{T}_{i} +
\sum^{M}_{i,j=1,i<j}(z^{*}_{ij}-z^{*}_{ji}) \phi'_{ij}=0
\end{equation}

For a positive semi-definite matrix $X=G^{T}G,$ let $D^{+}(X,q)$ and $D^{-}(X,q)$ be the sets of tuples of indices for 
which the corresponding entry of $\Phi^{T}X \Phi$ (equivalently the inner-product between two corresponding columns of 
$G\Phi$) is equal to $q$ and $-q$ respectively, that is, 
$$D^{+}(X,q)=\{(i,j): \phi^{T}_{i}X\phi_{j}=q\}$$
and 
$$D^{-}(X,q)=\{(i,j): \phi^{T}_{i}X\phi_{j}=-q\}.$$
It is clear that $$p^{*}_{ij}=0 \quad \mbox{if and only if } \quad (i,j)\in D^{+}(X^{*},q^{*})$$ and
$$q^{*}_{ij}=0, \quad \mbox{if and only if} \quad (i,j)\in D^{-}(X^{*},q^{*}).$$ From the definition of $D^{+}(X^{*},q^{*})$
and $D^{-}(X^{*},q^{*}),$ it follows that 
\begin{equation}\label{eq:3}
z^{*}_{ij} = 0, \quad \mbox{if} \quad(i,j)\notin D^{+}(X^{*},q^{*})
\end{equation}
and 
\begin{equation} \label{eq:4}
z^{*}_{ji}=0, \quad \mbox{if} \quad(i,j)\notin D^{-}(X^{*},q^{*}).
\end{equation}
Since $q^{*}>0,$
\begin{equation} \label{eq:5}
  \begin{split}
    1 &=\sum^{M}_{i,j=1,i<j}(z^{*}_{ij}+z^{*}_{ji})\\
    &=\sum_{(i,j)\in D^{+}(X^{*},q^{*})}{z^{*}_{ij}}+\sum_{(i,j)\in
      D^{-}(X^{*},q^{*})}{z^{*}_{ji}}.
  \end{split}
\end{equation}

Therefore, a positive definite matrix $X^*$ is optimal if and only if
(\ref{eq:2}), (\ref{eq:3}),  (\ref{eq:4}), (\ref{eq:5}) and $- \sum^{M}_{i=1}z^{*}_{ii}=q^{*}$ are satisfied. In 
other words if there exist $z^*$ satisfying the above equations then $X^*$ is optimal. 
Hence, if $X^* = I_{m\times m}$ is not an optimal solution 
there do not exist scalars  $\{\tilde{z}_{ij}\}^{M}_{i,j=1}$ satisfying the following conditions
  \begin{enumerate}
  \item $\tilde{z}_{ij} = 0$ for  $(i,j)\notin D^{+}(\mathbf{I}_{m},\mu_{\Phi})$,
  \item $\tilde{z}_{ji} = 0$ for  $(i,j)\notin D^{-}(\mathbf{I}_{m},\mu_{\Phi})$,
  \item $\sum^{M}_{i,j=1,i<j}(\tilde{z}_{ij}+\tilde{z}_{ji})=1$,
  \item $\sum^{M}_{i=1}\tilde{z}_{ii} \phi_{i}\phi^{T}_{i} + \sum^{M}_{i,j=1,i<j}(\tilde{z}_{ij}-
  \tilde{z}_{ji})\phi'_{ij}=0.$
  \item $ -\sum^{M}_{i=1}\tilde{z}_{ii}=\mu(\Phi)$
  \end{enumerate}
  
The above conditions can be written as a linear system of equations $\tilde{\Psi} \tilde{z}=y,$ where $\tilde{\Psi}_{(m^{2}+2) \times (M + |D^{+}_{I_{m}}|+ |D^{-}_{I_{m}}|)}=$
{\tiny{
\begin{equation*}
 \begin{bmatrix}
  \bigg(vec(\phi_{i}\phi^{T}_{i})\bigg)^{M}_{i=1} & \bigg(vec(\phi'_{ij})\bigg)_{(i,j)\in D^{+}_{I_{m}}} & -\bigg(vec(\phi'_{ij})\bigg)_{(i,j)\in D^{-}_{I_{m}}}\\
  
  0_{1\times M} & 1_{1\times |D^{+}_{I_{m}}|} & 1_{1\times |D^{-}_{I_{m}}|}\\
  
  1_{1\times M} & 0_{1\times |D^{+}_{I_{m}}|} & 0_{1\times |D^{-}_{I_{m}}|}
  
  \end{bmatrix},
 \end{equation*}}}
 $\tilde{z}_{(M + |D^{+}_{I_{m}}|+ |D^{-}_{I_{m}}|)\times 1}=$
   $$[(\tilde{z}_{ii})^{M}_{i=1} \quad (\tilde{z}_{ij})_{(i,j)\in D^{+}_{I_{m}}} \quad  (\tilde{z}_{ji})_{(i,j)\in D^{-}_{I_{m}}} ]^{T},$$
and   
 $y_{(m^{2}+2)\times 1}=[0_{m^{2}\times 1} \quad 1_{1\times 1} \quad -\mu(\Phi)]^{T}.$
  
Therefore, the infeasiblity of the linear system $\tilde{\Psi}\tilde{z}=y$ guarantees the existence of an operator $\hat{G}$ such that $\hat{G}\Phi$ is a unit norm frame and $\mu(\hat{G}\Phi)<\mu(\Phi).$ 
Despite this, there is no guarantee that 
$\hat{G}$ is positive definite.
Nevertheless, there exist
nonsingular matrices $G_n$ so that $G_n \to \hat{G}$ in the Frobenious norm. Since coherence $\mu(G\Phi)$ is a continuous function of $G$ in the Frobenious norm, we have $\mu(G_n\Phi) \to \mu(\hat{G}\Phi)$. This ensures that there exists a non-singular matrix $G$ for which $\mu(G\Phi) < \mu(\Phi).$
\end{proof}

The following corollary follows from the main theorem~\ref{Main Theorem}.
\begin{corollary}
\label{nullspace property}
If $\Psi$ has a trivial nullspace, then there exits an invertible operator $\hat{G}$ such that $\mu(\hat{G}\Phi) <\mu(\Phi).$
\end{corollary}
\begin{proof}
The proof follows from the fact that if $\Psi$ has a trivial nullspace then the two equations given in Theorem~\ref{Main Theorem} can not be satisfied.
\end{proof}



   


So far, we have discussed our theoretical findings. In the next section, we present numerical results in support of our analytical results.

\section{Numerical Observations}


In this section, we present the effect of coherence on random Gaussian matrices by invertable linear operators $G,$ obtained by solving the $C'_0$ problem (See Section~\ref{main body}).
To begin with, we considered random Gaussian matrices $\Phi \in \mathbb R^{m\times M}$ for different row sizes $m$ along with varying value for the column size $M.$  
\par As examples, we fixed row sizes as $10$ and $20$, while varying the column sizes, and generated the  Tables~\ref{GPre10} and  \ref{GPre20}.  From Table~\ref{GPre10}, for $M\leq 50,$ it may be noted that  the rank of the corresponding matrix $\Psi$ (as defined in Theorem~\ref{Main Theorem}) and $M+|D^{+}_{I_{m}}|+|D^{-}_{I_{m}}|,$ the column size of $\Psi$, are the same. As a result, $\Psi$ has trivial nullspace. Consequently, from Corollary~\ref{nullspace property}, one can expect a strict fall in coherence. In Table~\ref{GPre10}, we observe the similar behaviour on the coherence as predicted by Corollary~\ref{nullspace property}, that is, for $M\leq 50,$ $\mu(G\Phi)$ is strictly smaller than $\mu(\Phi).$
For $M>50,$ $M+|D^{+}_{I_{m}}|+|D^{-}_{I_{m}}|$ becomes strictly greater than rank of $\Psi$ and we observe from Table~\ref{GPre10} that the coherence remains unchanged.


\begin{table} [h]
\caption{Solving $C'_0$ for Gaussian random matrices with row size $10$ and column size incremented by $10$ starting with $20.$}\label{GPre10}
\centering
\begin{tabular}[b]{|c|c|c|c|c|}\hline

        $m \times M$ & $\mu(\Phi)$ & $M+|D^{+}_{I_{m}}|+|D^{-}_{I_{m}}|$ & rank $(\Psi)$ & $\mu(G\Phi)$  \\ [0.3ex] 
        \hline
        $10 \times 20$ & 0.7225 & 22 & 22 &  0.4968  \\
        \hline
        $10 \times 30$ & 0.7468 & 32 & 32 &  0.6234  \\
		\hline
        $10 \times 40$ & 0.8738 & 42 & 42 & 0.7541 \\
        \hline
        $10 \times 50$ & 0.8509 & 52 & 52 &  0.8261  \\
        \hline
        $10 \times 60$ & 0.8765 & 62 & 56 &  0.8765  \\
		\hline
        $10 \times 70$ & 0.8626 & 72 & 56 &  0.8626   \\
        \hline
   \end{tabular}
 \end{table}
 
 For a given frame $\Phi$, the authors in \cite{li_2013}, proposed a method to find an invertible operator $G$ 
 for which the associated gram 
 matrix of the transformed frame becomes close to the identity matrix. In other words, the transformed frame 
 becomes close to a unit norm frame and has small coherence. For the frames
 obtained via solving optimization 
 problem described in \cite{li_2013}, we observe similar behaviour on the coherence provided in Table~\ref{GPre20}
 as predicted by Corollary~\ref{nullspace property}. Therefore, we can justify the fall in coherence as described in \cite{li_2013} via proposed null space property.   


 
 
 \begin{table} [h]
\caption{Applying optimization method in \cite{li_2013} on Gaussian random matrices with row size $300$ and column size incremented by $300$ starting with $610$.}\label{GPre20}
\centering
\begin{tabular}[b]{|c|c|c|c|c|}\hline

        $m \times M$ & $\mu(\Phi)$ & $M+|D^{+}_{I_{m}}|+|D^{-}_{I_{m}}|$ & rank( $\Psi$) & $\mu(G\Phi)$ \\ [0.3ex] 
        \hline
        $300 \times 610 $ & 0.2562 & 612 & 612 &  0.1985  \\
        \hline
        $300 \times 810$ & 0.2742 & 812 & 812 &  0.2219  \\
		\hline
        $300 \times 1010 $ & 0.2898  & 1012 & 1012 & 0.2368  \\
        \hline
        $300 \times 1210 $ & 0.2778 & 1212 & 1212 &  0.2648  \\
        \hline
        $300 \times 1410  $ & 0.2759 & 1412 & 1412 &  0.2536  \\
		\hline
        $300  \times 1510$ & 0.2731 & 1512 & 1512 & 0.2649  \\
        \hline
       
   \end{tabular}
 \end{table}

\section{Concluding remarks}
In the present work, we derived properties of an initial frame that ensure strict fall in coherence via left multiplication by an invertible linear operator. It turns out that the infeasibilty of a linear system of equations obtained from the initial frame results in an equivalent frame with a larger minimum frame angle. In particular, if a certain matrix obtained from initial frame possesses trivial nullspace, then there exists an equivalent frame with strictly smaller coherence.
The numerical results also support our theoretical analysis.

\end{document}